\newtheorem{Theorem}{Theorem}[section]
\newtheorem{Lemma}{Lemma}[section]
\theoremstyle{definition}
\theoremstyle{remark}
\newtheorem{Remark}{Remark}[section]
\numberwithin{equation}{section}
\renewcommand{\u}{{\bf u}}
\newcommand{\R}{{\mathbb R}}
\newcommand{\Dv}{{\rm div}}
\newcommand{\m}{{\bf m}}
\newcommand{\T}{{\mathcal T}}
\def\f{\frac}
\renewcommand{\O}{\Omega}
\renewcommand{\T}{\mathbb T^N}
\def\hf1{^\f{1}{1-\xi^2}}
\def\be{\begin{equation}}
\def\en{\end{equation}}
\def\bs{\begin{split}}
\def\es{\end{split}}
\author[R.M. Chen]{Robin Ming Chen}
\address{Department of Mathematics, University of Pittsburgh, Pittsburgh, PA 15260.}
\email{mingchen@pitt.edu}
\author[C. Yu]{Cheng Yu}
\address{Department of Mathematics, University of Texas, Austin, TX 78712.}
\email{yucheng@math.utexas.edu}
\title[Energy conservation for inhomogeneous Euler equations]
{Onsager's energy conservation for inhomogeneous Euler equations}
\keywords{ Onsager's energy conservation, inhomogeneous Euler equations, weak solution.}
\subjclass[2000]{}
\date{\today}
\begin{document}
\begin{abstract} This paper addresses the problem of energy conservation for the two- and three-dimensional density-dependent Euler equations. Two types of sufficient conditions on the regularity of solutions are provided to ensure the conservation of total kinetic energy on the entire time interval including the initial time. The first class of data assumes integrability on the spatial gradient of the density, and hence covers the classical result of Constantin-E-Titi \cite{CET} for the homogeneous Euler equations. The other type of data imposes extra time Besov regularity on the velocity profile, and the corresponding result can be applied to deal with a wide class of rough density profiles.
\end{abstract}

\maketitle
\section{Introduction}

The theory of hydrodynamics is a fascinating subject, with a long history in both pure and applied mathematics. The first comprehensive model was proposed by Euler in the 1750s, with the most notable variant introduced almost a century later by Navier and Stokes to allow for viscous effects. Besides being able to describe smooth flow motion, the Euler equations also model a wide range of fluids with singularities or limited smoothness, for instance flows with point vortices, vortex sheets, and turbulent flows in the limit of vanishing viscosity. Such configurations naturally lead to considering the Euler equations in a {\it weak} sense.

Given that the solution to the Euler equations is sufficiently smooth, say, for e.g.,  $C^1$, it is easy to see that the total kinetic energy of the flow is conserved. On the other hand it had long been observed experimentally (but mathematically still open) that  anomalous dissipation of energy -- energy dissipation independent of viscosity -- persists in fully developed turbulent flow. Therefore it is reasonable to expect the existence of weak solutions to the Euler equations which do not conserve energy; see Scheffer \cite{Scheffer}, Shnirelman \cite{Shnirelman} and De Lellis--Sz\'ekelyhidi \cite{DS}. This possibility goes by the name of the ``Onsager conjecture" \cite{O}: non-conservation of energy in the three-dimensional Euler equations would be related to the loss of regularity. Specifically, Onsager conjectured that every weak solution to the Euler equations with H\"older continuity exponent $\alpha > {1\over 3}$ conserves energy; and anomalous dissipation of energy occurs when $\alpha < {1\over 3}$. See \cite{DS13,ES06,F95,R03} for reviews and further discussions.

The first part of the conjecture was proved by Eying \cite{Ey}, Constantin-E-Titi \cite{CET}, Duchon-Robert \cite{DR}, Cheskidov et al. \cite{CCFS}, among others. The sharpest result is given in \cite{CCFS}, where the conservation of energy was proved in the Besov space setting, and the results allow for possible failure of energy conservation in the endpoint $\alpha = {1\over 3}$ case.

The development toward the other direction of the conjecture is more recent. The rigorous mathematical work establishing existence of dissipative weak Euler solutions of the type conjectured by Onsager began with the pioneering work of DeLellis-Sz\'ekelyhidi \cite{DS13b,DS14} based on the convex integration approach, and has since culminated in constructions of solutions up to the critical 1/3 regularity \cite{BDSV,I,I2}.

In this paper we consider the energy conservation for the weak solutions of the {\it inhomogeneous} Euler equations, namely
\begin{equation}
\begin{split}
\label{Euler}
&\rho_t+\Dv(\rho\u)=0\\
&(\rho\u)_t+\Dv(\rho\u\otimes\u)+\nabla P=0\\
&\Dv\u=0,
\end{split}
\end{equation}
with initial data
\begin{equation}
\label{initial data}
\rho|_{t=0}=\rho_0(x),\;\;\;\;\;\rho\u|_{t=0}=\m_0(x)=\rho_0\u_0,
\end{equation}
where $P$ denotes the pressure, $\rho\ge 0$ is the density of fluid, $\u$ stands for the velocity of fluid. For the sake of simplicity we will consider the periodic setting $x \in \mathbb{T}^N$ with $N = 2, 3$.
Here we define $\u_0=0$ on the set $\{x:\ \rho_0(x)=0\}.$

Density fluctuations are widely present in turbulent flows. They may arise from non-uniform species concentrations, temperature variation or pressure. In geo-fluids, for instance, where external forces are present (e.g. gravity), density stratification can be caused by temperature and salinity gradients (ocean) or moisture effects (atmosphere).  Also, a strong density inhomogeneity can be induced by the mixing of different-density species. A canonical example is the Rayleigh-Taylor instability of an interface between two fluids of different densities.

Mathematically, having a variable density changes the regularity structure substantially, and poses additional challenges compared to the much more extensively studied homogeneous flows. The vorticity equation becomes
\begin{equation*}
\omega_t + \u \cdot \omega = (\omega \cdot \nabla) \u + {\nabla \rho \times \nabla P \over \rho^2},
\end{equation*}
where in 2D, the cross product is understood as $\nabla^\perp \rho \cdot \nabla P$. One sees that fluid particles representing different instantaneous density respond very differently to
pressure gradients. Vorticity can be generated from non-aligned density and pressure gradients (the baroclinic torque). Moreover, the regularity of the density gradient does not propagate from the initial data. All of these indicate possible energy fluctuation coming from the loss of smoothness of the density.

On the other hand, the roughness of the density can be ``traded off" by assuming more regularity of the velocity field. The general strategy to prove energy conservation is to mollify the momentum equation and then test it against some suitable velocity-type test function. When passing to the limit as the mollification parameter tends to zero, the commutator estimates are required for treating the nonlinear terms. Compared with the homogeneous equations, the momentum equation in \eqref{Euler} contains a nonlinear term $\rho\u$ in the time derivative, and hence needs a commutator estimate (in time).

Leslie-Shvydkoy \cite{LS} managed to avoid this time commutator estimate by using the momentum $\m = \rho\u$ as the unknown variable instead of the velocity $\u$. This way to obtain the energy conservation one needs to choose the test function to be ${\m \over \rho}$ in a regularized form. The price to pay is that (i) the divergence-free structure of the test function is no longer valid, and hence additional assumptions on the pressure has to be made; (ii) the density must be bounded away from zero.

Feireisl et al \cite{FGSW} took a more direct approach by assuming Besov regularity also in time to pertain the divergence-free property of the test function and to allow for the existence of vacuum in the system. Their method can also be applied to treat the case of isentropic compressible flows. What was proved there is a stronger result, namely a local energy equality in the sense of distribution, under a different set of regularity assumptions than those of \cite{LS}. However it is still conceivable that the energy might fluctuate in time in a set of times of measure zero, unless additional smoothness conditions are imposed.

The objective of this paper is to continue addressing the relation between the energy conservation and the degree of regularity of the solutions for system \eqref{Euler}. In particular we provide sufficient conditions on the regularity of solutions to ensure the conservation of the total energy. Our approach is in the spirit of Constantin-E-Titi \cite{CET}, with additional care to the density term $\rho$. We choose to work with the unknowns $\u$ and $\rho$, like in Feireisl et al \cite{FGSW}. The main differences between our method and the one in \cite{FGSW}, which also constitutes the main contribution of this paper, are explained in the paragraphs below. Note that many of the ideas have been successfully developed to the case of {\it compressible} Navier-Stokes equations \cite{VY}, wherein the main purpose was to derive a priori estimates rather than energy equality. Further development of the method does lead to the energy conservation of such flows \cite{Yu2}. Here we see that this tool also works well for incompressible flows.

\subsubsection*{Time regularity of $\rho$}
The first difficulty comes from the fact that the commutator estimates for the nonlinear time derivative $(\rho\u)_t$ naturally requires the time regularity of $\rho$. However from the mass conservation, this time regularity can be transferred to the spatial regularity of $\rho$. As is explained earlier, the regularity of $\nabla \rho$ does not propagate by the flow. Therefore imposing assumptions on $\nabla \rho$ seems to be a reasonable choice for the energy conservation. In particular, doing so allows one to avoid assuming additional time regularity on the velocity field $\u$, and hence can recover the classical result of Constantin-E-Titi \cite{CET}; see Remark \ref{rk_thm1}. Moreover, similar as in \cite{FGSW}, working with $\rho$ and $\u$ makes it possible to choose test functions with a divergence-free structure. As a consequence, our result applies to the case when vacuum is present.

\subsubsection*{Pressure} When the test function hits the pressure term in the momentum equation, it is natural to impose some conditions on the pressure in order to obtain the energy equality. In the constant density case, the pressure solves an elliptic equation
\begin{equation*}
-\Delta P = \sum_{i, j} \partial_{x_i}\partial_{x_j} (u_iu_j),
\end{equation*}
and thus the regularity of pressure can be inferred from the velocity. However for inhomogeneous flows, such a conclusion does not hold unless the density is sufficiently regular. Therefore in the density-dependent case it is common to assume certain regularity condition on the pressure; see \cite{FGSW,LS}. Here we introduce a special test function that is divergence-free to remove the pressure term completely, so that we do not need any assumption on the pressure.

\subsubsection*{Continuity at $t = 0$}
In the homogeneous case, the velocity field $\u$ is continuous at time $t=0$, and hence the energy conservation holds all the way up to the initial time. This is not necessarily true for inhomogeneous flows. However from the energy point of view, the analogous requirement would be that $\sqrt{\rho} \u$ is continuous in the strong topology at $t = 0$. This can be done by studying the continuity of $\rho\u$ and $\sqrt{\rho}$ at $t=0$. Using the equations they satisfy, one can show that they are continuous at $t=0$ in the weak topology, which, together with the initial regularity on $\u$, is enough to conclude the continuity of $\sqrt{\rho}\u$ in the strong topology. Finally to continue the energy conservation up to the initial time, we introduce a special type of test function which originally vanishes near $t=0$, but will later be extended past $t=0$ into $t<0$. 

\vskip0.3cm

\subsection{Main results}

We provide two types of sufficient regularity conditions on the weak solution of \eqref{Euler} that ensure the conservation of the energy. The first one requires the control of $\nabla\rho$ to avoid additional time regularity assumption on $\rho$. The definition of the Besov spaces $B^{\alpha,\infty}_p$ is given in Section \ref{Sec_Besov}.

\begin{Theorem}\label{thm_main1}
Let $(\rho,\u)$ be a weak solution of \eqref{Euler}-\eqref{initial data} in the sense of distributions. Assume
\begin{equation}
\label{condition for first theorem}
\rho\in L^{\infty}(0,T;\T)\cap L^{p}(0,T;W^{1,p}(\T)),\;\;\u \in L^q(0,T;B_q^{\alpha,\infty}(\T)),
\end{equation}
for any $\frac{1}{p}+\frac{3}{q}\leq 1$ and $\alpha>\frac{1}{3},$
\begin{equation}
\label{condition for kinetic energy}\sqrt{\rho}\u \in L^{\infty}(0,T;L^2(\T))
\end{equation}
 and $$\u_0\in L^2(\T).$$ Then the energy $$E(t)=\int_{\T}\rho|\u|^2\,dx$$
is conserved, that is, we have $E(t)=E(0),$
 for any $t\in [0,T].$
\end{Theorem}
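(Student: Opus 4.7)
The plan is to adapt the Constantin--E--Titi program to \eqref{Euler}, eliminating the pressure from the outset by testing the space-mollified momentum equation against the divergence-free field $\u^\epsilon$, and trading the missing time regularity of $\rho$ for its spatial regularity $\nabla\rho \in L^p$. Let $\eta_\epsilon$ be a standard spatial mollifier and set $f^\epsilon := f * \eta_\epsilon$. Mollifying \eqref{Euler} in $x$ yields
\begin{equation*}
\del_t(\rho\u)^\epsilon + \Dv(\rho\u\otimes\u)^\epsilon + \nabla P^\epsilon = 0, \qquad \del_t\rho^\epsilon + \Dv(\rho\u)^\epsilon = 0.
\end{equation*}
Since $\Dv \u^\epsilon = 0$, taking the $L^2_x$ inner product of the momentum equation with $\u^\epsilon$ removes $\nabla P^\epsilon$. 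Multiplying by a time cutoff $\phi \in C_c^\infty((0,T))$, integrating in $(t,x)$, and using the mollified mass equation to symmetrize, one rewrites the result in the form
\begin{equation*}
\int_0^T\!\!\int_{\T}\rho^\epsilon|\u^\epsilon|^2 \,\phi'(t)\,dx\,dt = \sum_i \int_0^T \mathcal R^\epsilon_i(t)\,\phi(t)\,dt,
\end{equation*}
where each $\mathcal R^\epsilon_i$ is a commutator expression built from $(\rho\u)^\epsilon - \rho^\epsilon\u^\epsilon$ and $(\rho\u\otimes\u)^\epsilon - \rho^\epsilon\u^\epsilon\otimes\u^\epsilon$ (and their relatives), paired against $\u^\epsilon$ or $\nabla\u^\epsilon$.

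\textbf{Commutator bounds.} Apply the classical Constantin--E--Titi estimates: on the density side,
\begin{equation*}
\|(fg)^\epsilon - f^\epsilon g^\epsilon\|_{L^r(\T)} \lesssim \epsilon \|\nabla f\|_{L^p(\T)}\|g\|_{L^q(\T)},\qquad \tfrac1r = \tfrac1p + \tfrac1q,
\end{equation*}
so each residual involving $\rho$ gains one factor of $\epsilon$ at the cost of $\nabla\rho \in L^p$; on the velocity side,
\begin{equation*}
\|\u - \u^\epsilon\|_{L^q(\T)} \lesssim \epsilon^\alpha\|\u\|_{B^{\alpha,\infty}_q(\T)},\quad \|\nabla\u^\epsilon\|_{L^q(\T)} \lesssim \epsilon^{\alpha-1}\|\u\|_{B^{\alpha,\infty}_q(\T)}.
\end{equation*}
The hypothesis $\tfrac1p + \tfrac3q \le 1$ supplies H\"older integrability of products with one $\rho$- (or $\nabla\rho$-) factor and up to three $\u$-factors, both in space and in time through $\rho \in L^p_t$, $\u \in L^q_t$; the condition $\alpha > \tfrac13$ then secures a positive net $\epsilon$-power in the worst commutator, which scales like $\epsilon^{3\alpha-1}$. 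Hence $\sum_i \mathcal R^\epsilon_i \to 0$ in $L^1(0,T)$, and passing to $\epsilon \to 0$ yields $\frac{d}{dt}\int_{\T}\rho|\u|^2\,dx = 0$ in $\mathcal D'(0,T)$, so $E$ is constant on $(0,T)$.

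\textbf{Continuity at $t = 0$.} From \eqref{Euler} the distributional time derivatives $\del_t\rho$ and $\del_t(\rho\u)$ lie in a negative Sobolev space, giving $\rho \in C_w([0,T];L^s(\T))$ and $\rho\u \in C_w([0,T];L^{s'}(\T))$; renormalizing the continuity equation with $\beta(z) = \sqrt{z}$ (valid since $\Dv\u = 0$) further produces $\sqrt\rho \in C_w([0,T];L^{2s}(\T))$. Combining these, $\sqrt\rho\,\u \rightharpoonup \sqrt{\rho_0}\u_0$ weakly in $L^2$ as $t \to 0^+$; using \eqref{condition for kinetic energy} together with the already established $E(t) = E(0^+)$, weak lower semicontinuity of the norm forces $\|\sqrt\rho\,\u(t)\|_{L^2} \to \|\sqrt{\rho_0}\u_0\|_{L^2}$ and hence strong $L^2$ convergence. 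To promote $E(t) = E(s)$ (known for $0 < s$) down to $s = 0$, we test against a one-sided cutoff $\phi_\dl$ equal to $1$ on $[\dl, T - \dl]$ and ramping linearly to $0$ on $[0,\dl]$; sending $\dl \to 0$ and invoking the strong continuity of $\sqrt\rho\,\u$ at $t = 0$ delivers $E(t) = E(0)$.

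\textbf{Main obstacle.} The technical core is the enumeration and balancing of the commutator residuals. Each $\mathcal R^\epsilon_i$ must be split so that the $\epsilon$-power comes from a single commutator factor while the remaining factors are absorbed by the H\"older exponent $\tfrac1p + \tfrac3q \le 1$ and the Besov norm of $\u$; this is where the specific scaling in the hypotheses becomes tight. A particularly delicate term is the one carrying $\del_t[(\rho\u)^\epsilon - \rho^\epsilon\u^\epsilon]$, because that commutator has only distributional time regularity: the time derivative must be transferred onto $\phi$ (or, after integration by parts in space, rewritten via the mollified mass equation in terms of $\nabla\u^\epsilon$ contracted against a spatial commutator of $\rho$ and $\u$), which is precisely the device by which the missing time regularity of $\rho$ is replaced by the spatial bound $\nabla\rho \in L^p$.
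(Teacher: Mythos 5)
Your overall architecture matches the paper's: a Constantin--E--Titi scheme with a divergence-free mollified velocity as test function to kill the pressure, the identity $\rho_t=-\u\cdot\nabla\rho$ to convert the missing time regularity of $\rho$ into the assumed spatial bound $\nabla\rho\in L^p$, the $\varepsilon^{3\alpha-1}$ balance for the worst convective commutator, and weak continuity of $\rho\u$ and $\sqrt{\rho}$ at $t=0$ upgraded to strong $L^2$ continuity of $\sqrt{\rho}\,\u$, followed by a one-sided linear ramp in time. However, there is one genuine gap, and it sits exactly at the point you yourself flag as ``the main obstacle'': the time-derivative commutator. You mollify in space only, and then the term
\begin{equation*}
\int_0^T\!\!\int_{\T}\partial_t\bigl[(\rho\u)^{\varepsilon}-\rho^{\varepsilon}\u^{\varepsilon}\bigr]\cdot\u^{\varepsilon}\,\phi\,dx\,dt
\end{equation*}
cannot be treated by either of the devices you sketch. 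Integrating the time derivative by parts produces $\partial_t\u^{\varepsilon}$, which for a purely spatial mollifier has no better time regularity than $\partial_t\u$ itself --- a distribution that cannot be paired pointwise with $(\rho\u)^{\varepsilon}-\rho^{\varepsilon}\u^{\varepsilon}$ or multiplied by $\rho$; and the mollified mass equation controls $\partial_t\rho^{\varepsilon}$ but does nothing to eliminate the $\rho^{\varepsilon}\partial_t\u^{\varepsilon}$ piece. So the reduction to residuals of the form $\sum_i\mathcal R_i^{\varepsilon}\phi$ is not actually established.

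The paper closes this by mollifying in \emph{space-time}: the mollifier $\eta_{\varepsilon}$ is a space-time kernel, the test function is $(\psi(t)\u^{\varepsilon})^{\varepsilon}$ (still divergence-free, and the symmetry of $\eta$ throws the outer mollification back onto the equation), and then $\u^{\varepsilon}$ is smooth in $t$, so $(\rho\u^{\varepsilon})_t=\rho_t\u^{\varepsilon}+\rho\u^{\varepsilon}_t$ is classical and the commutator $((\rho\u)_t)^{\varepsilon}-(\rho\u^{\varepsilon})_t$ falls under the DiPerna--Lions commutator lemma (Lemma \ref{Lions's lemma}) with $\partial=\partial_t$, $f=\rho$, $g=\u$, using precisely that $\rho_t=-\u\cdot\nabla\rho\in L^{q}(0,T;L^{\kappa}(\T))$ with $\frac1\kappa=\frac1p+\frac1q$; this is where the hypothesis $\frac1p+\frac3q\le1$ is consumed. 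If you replace your spatial mollifier by a space-time one (or otherwise rigorously justify the time integration by parts against the merely $L^q_t$ function $|\u^{\varepsilon}|^2$), the rest of your argument goes through essentially as in the paper. A secondary caveat: in your continuity-at-zero step, weak lower semicontinuity alone gives only $\|\sqrt{\rho_0}\u_0\|_{L^2}\le\liminf_{t\to0}\|\sqrt{\rho}\u(t)\|_{L^2}$, not the reverse inequality needed for norm convergence; the paper instead expands $\int|\sqrt{\rho}\u-\sqrt{\rho_0}\u_0|^2$ algebraically and reduces everything to the single weak-convergence quantity $W$, which it shows vanishes using $\rho\u\in C([0,T];L^2_{\mathrm{weak}})$, $\sqrt{\rho}\in C([0,T];L^b_{\mathrm{weak}})$ and $\u_0\in L^2$.
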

\begin{Remark}\label{rk_thm1}
In the case of constant density, condition \eqref{condition for first theorem} recovers the classical result of Constantin-E-Titi \cite{CET} by taking $p = \infty$ and $q = 3$.
\end{Remark}

Our second theorem treats the case when density can experience large fluctuation. This covers a rather wide range of situations including flows with mixing layers and vortex sheets. In this case we need to impose extra time regularity on the velocity field $\u$ to compensate for the roughness of $\rho$.
\begin{Theorem}\label{thm_main2}
Let $(\rho,\u)$ be a weak solution of \eqref{Euler}-\eqref{initial data} in the sense of distributions. Assume
\begin{equation}
\label{condition for the second theorem}
\rho\in L^{\infty}(0,T;L^{\infty}(\T)),\;\;\u \in B_p^{\beta,\infty}(0,T;B_q^{\alpha,\infty}(\T)),
\end{equation}
where $p,q\geq 3$, $2\alpha + \beta > 1$
and  $\alpha+2\beta> 1,$ \begin{equation*}
\sqrt{\rho}\u \in L^{\infty}(0,T;L^2(\T))
\end{equation*} and $$\u_0\in L^2(\T).$$ Then the energy $$E(t)=\int_{\T}\rho|\u|^2\,dx$$
is conserved, that is, we have $E(t)=E(0),$ for any $t\in [0,T].$
\end{Theorem}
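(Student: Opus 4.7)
The plan is to adapt the Constantin--E--Titi \cite{CET} regularization strategy to the space-time setting, following Feireisl et al.\ \cite{FGSW} in spirit but preserving the divergence-free character of the test function so as to avoid any explicit condition on the pressure. Let $\eta_\varepsilon(t,x)$ be a symmetric joint mollifier on $\R\times\T$ and write $f^\varepsilon := \eta_\varepsilon *_{t,x} f$. Mollifying the continuity and momentum equations, testing the latter against the divergence-free field $\u^\varepsilon$ (so the pressure integrates out), and mimicking the smooth-case derivation of $\frac{d}{dt}\int\rho|\u|^2\,dx=0$, I arrive at
\begin{equation*}
\frac12\frac{d}{dt}\int_\T \rho^\varepsilon|\u^\varepsilon|^2\,dx = \mathcal{R}_1^\varepsilon + \mathcal{R}_2^\varepsilon + \mathcal{R}_3^\varepsilon,
\end{equation*}
where the remainders $\mathcal{R}_j^\varepsilon$ are integrals of the commutators $(\rho\u)^\varepsilon - \rho^\varepsilon\u^\varepsilon$ and $(\rho\u\otimes\u)^\varepsilon - \rho^\varepsilon\u^\varepsilon\otimes\u^\varepsilon$ paired with $\nabla\u^\varepsilon$ (from the convection term) or with $\partial_t\u^\varepsilon$ (after a time integration by parts on the $\partial_t$-commutator).

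The heart of the argument is to show $\mathcal{R}_j^\varepsilon\to 0$. I would use the Constantin--E--Titi identity $(fg)^\varepsilon - f^\varepsilon g^\varepsilon = r_\varepsilon(f,g) - (f-f^\varepsilon)(g-g^\varepsilon)$ together with the Besov mollifier bounds from Section \ref{Sec_Besov}, schematically
\begin{equation*}
\|\u - \u^\varepsilon\|_{L^p_tL^q_x}\lesssim\varepsilon^\alpha+\varepsilon^\beta,\qquad \|\nabla\u^\varepsilon\|_{L^p_tL^q_x}\lesssim\varepsilon^{\alpha-1},\qquad \|\partial_t\u^\varepsilon\|_{L^p_tL^q_x}\lesssim\varepsilon^{\beta-1}.
\end{equation*}
Splitting
\begin{equation*}
(\rho\u\otimes\u)^\varepsilon - \rho^\varepsilon\u^\varepsilon\otimes\u^\varepsilon = \big[(\rho\u\otimes\u)^\varepsilon - \rho^\varepsilon(\u\otimes\u)^\varepsilon\big] + \rho^\varepsilon\big[(\u\otimes\u)^\varepsilon - \u^\varepsilon\otimes\u^\varepsilon\big],
\end{equation*}
the first piece is bounded in $L^{p/2}_tL^{q/2}_x$ by $\|\rho\|_{L^\infty}$ times the modulus of continuity of $\u\otimes\u$, and the second by the square of the modulus of continuity of $\u$, with the purely spatial, purely temporal, and mixed components tracked separately. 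Applying H\"older (which forces $p,q\geq 3$ because the nonlinearity is cubic in $\u$) and pairing spatial error contributions with $\partial_t\u^\varepsilon$ and temporal ones with $\nabla\u^\varepsilon$ where available, the errors reduce to orders $\varepsilon^{2\alpha+\beta-1}$ and $\varepsilon^{\alpha+2\beta-1}$, which vanish precisely under the hypotheses $2\alpha+\beta>1$ and $\alpha+2\beta>1$.

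Once $\mathcal{R}_j^\varepsilon\to 0$, integrating in $t\in(\tau_1,\tau_2)\subset(0,T)$ and passing to the limit yields $E(\tau_1)=E(\tau_2)$, so $E$ is constant on $(0,T)$. To extend the equality up to the initial time, one first uses the continuity and momentum equations to produce weakly continuous representatives of $\rho$ and $\rho\u$ at $t=0$; combined with the assumption $\u_0\in L^2(\T)$, this upgrades to strong $L^2$-continuity of $\sqrt{\rho}\,\u$ at $t=0$, and a test function vanishing near $t=0$ and then extended past into $t<0$ (as sketched in the introduction) carries the conservation law all the way to $t=0$.

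The principal obstacle is that $\rho$ enjoys no regularity beyond $L^\infty$, so no gain in $\varepsilon$ can be extracted from the density itself when mollifying; the entire burden falls on the combined time--space Besov regularity of $\u$. The symmetric Besov exponent conditions $2\alpha+\beta>1$ and $\alpha+2\beta>1$ reflect exactly how much regularity in each variable is needed of $\u$ to compensate for this loss.
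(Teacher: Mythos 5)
Your proposal follows essentially the same route as the paper's proof: space--time mollification, testing the momentum equation against the divergence-free field built from $\u^{\varepsilon}$ so the pressure drops out, an integration by parts in time that trades the $\partial_t$-commutator for a term paired with $\partial_t\u^{\varepsilon}$, commutator estimates in the space--time Besov scale yielding the rates $\varepsilon^{2\alpha+\beta-1}$ and $\varepsilon^{\alpha+2\beta-1}$, and finally the weak-continuity argument for $\rho\u$ and $\sqrt{\rho}$ at $t=0$ upgraded to strong $L^2$-continuity of $\sqrt{\rho}\,\u$ together with the extended test function. The only deviation is cosmetic: you insert $\rho^{\varepsilon}(\u\otimes\u)^{\varepsilon}$ as the intermediate term in the convection commutator where the paper uses $(\rho\u)^{\varepsilon}\otimes\u$, but both splittings are handled by the same key lemma and lead to the identical exponent conditions $2\alpha+\beta>1$ and $\alpha+2\beta>1$.
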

\begin{Remark}\label{rk_thm2}
Again in this theorem we do not pose any time regularity of $\rho$ in the Besov spaces. To see this, note that when dealing with the commutator estimates for $(\rho\u)_t$, an integration by parts places the time derivative on the test function which involves only the velocity $\u$. Hence assuming more time regularity on $\u$ would lead to the desired convergence.
\end{Remark}

\vskip0.3cm
\bigskip

\section{Besov Space and a commutator lemma}\label{Sec_Besov}
In this section we briefly recall some properties of the Besov space $B_p^{\alpha,\infty}( \O)$, and prove a key estimate which is similar to the commutator lemma in \cite{F04,L}.

For $0 < \alpha < 1$ and $1\le p \le \infty$, we define the Besov space to be the set of all functions equipped with the following norm
\begin{equation}
\label{Besov norm}
\|w\|_{B_p^{\alpha,\infty}( \O)}:=\|w\|_{L^p(\O)}+\sup_{\xi\in \O}\frac{\|w(\cdot+\xi)-w\|_{L^p(\O\cap (\O-\xi)}}{|\xi|^{\alpha}},
\end{equation}
where $\O-\xi=\{x-\xi:x\in\O\}.$

Next we define $$f^{\varepsilon}(t,x) := \eta_{\varepsilon}* f(t,x),\;\;\; t>\varepsilon,$$
where $\eta_{\varepsilon}=\frac{1}{\varepsilon^{N+1}}\eta(\frac{t}{\varepsilon},\frac{x}{\varepsilon})$, and $\eta(t,x)\geq 0$ is a smooth even function compactly supported in the space-time ball of radius $1$, and with integral equal to $1$.

Now we are ready to recall some classical properties of the Besov space as follows:
\begin{equation}
\label{convergence in Besov space}
\|w^{\varepsilon}-w\|_{L^p(\O)}\leq C\varepsilon^{\alpha}\|w\|_{B_p^{\alpha,\infty}( \O)}
\end{equation}
and
\begin{equation}
\label{gradient property in Besov space}
\|\nabla w^{\varepsilon}\|_{L^p(\O)}\leq C\varepsilon^{\alpha-1}\|w\|_{B_p^{\alpha,\infty}( \O)}.
\end{equation}

We will rely on the following lemma  which was proved in \cite{F04, L}. The statement of the result we adopt here is in the spirit of \cite{LV}.
 \begin{Lemma}
 \label{Lions's lemma}
Let $\partial$ be a partial derivative in space or time. Let $f$, $\partial f \in L^p(\R^+\times \O)$, $g\in L^q (\R^+\times \O)$ with $1\leq p,\,q \leq \infty,$ and $\frac{1}{p}+\frac{1}{q}\leq 1. $ Then, we have
$$\left\|[\partial(fg)]^{\varepsilon}-\partial(f\,g^{\varepsilon}) \right\|_{L^r(\R^+\times\O)}\leq C\|\partial f\|_{L^p(\R^+\times\O)}\|g\|_{L^q(\R^+\times\O)}$$
for some constant $C>0$ independent of $\varepsilon$, $f$ and $g$, and with $\frac{1}{r}=\frac{1}{p}+\frac{1}{q}$. In addition,
$$[\partial(fg)]^{\varepsilon}-\partial(f\,g^{\varepsilon}) \to 0\quad\text{ in } L^r(\R^+\times\O)$$
as $\varepsilon\to 0$ if $r<\infty.$
 \end{Lemma}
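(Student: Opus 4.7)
The plan is to follow the classical DiPerna--Lions approach developed in \cite{F04, L}. The strategy is to write the commutator as an integral operator with kernel $\partial\eta_\varepsilon$, rescale to extract the singular behavior as $\varepsilon \to 0$, and exploit the $L^p$ regularity of $\partial f$ to obtain a uniform bound that also yields the strong convergence after a density argument.

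First I would rewrite $I_\varepsilon(x) := [\partial(fg)]^\varepsilon(x) - \partial(f\, g^\varepsilon)(x)$ by transferring the derivative $\partial$ from $fg$ onto the mollifier via integration by parts, and then subtracting the two terms produced by the product rule on $\partial(f\, g^\varepsilon)$, which yields the kernel representation
$$
I_\varepsilon(x) = \int \partial_x \eta_\varepsilon(x-y)\bigl(f(y)-f(x)\bigr)g(y)\,dy \;-\; \partial f(x)\,g^\varepsilon(x).
$$
The substitution $y = x - \varepsilon z$ (in the joint space-time variable on $\R^+\times\O$) converts $\partial_x\eta_\varepsilon$ into a factor $\varepsilon^{-1}(\partial\eta)(z)$, while rewriting the finite difference $f(x-\varepsilon z)-f(x)$ in integral form along the segment from $x$ to $x-\varepsilon z$ supplies a compensating $\varepsilon$ together with one factor of $\partial f$. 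The quantitative $L^r$ bound then follows from Minkowski's integral inequality in the auxiliary variables $(z,s)$ and H\"older's inequality with exponents $p$ and $q$, using translation invariance of the Lebesgue norms; the resulting constant depends only on $\int|z||\partial\eta(z)|\,dz$ and $\|\eta\|_{L^1}$, hence is independent of $\varepsilon$, $f$, and $g$.

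For the strong convergence $I_\varepsilon\to 0$ in $L^r$ when $r<\infty$, I would argue by density: for $f\in C^\infty_c$ the integrand converges pointwise a.e.\ to zero by continuity of translations, and dominated convergence, justified by the uniform bound, yields the claim. For general $f$, approximate by smooth $f_k$ with $f_k\to f$ and $\partial f_k \to \partial f$ in $L^p$, split $I_\varepsilon[f,g]=I_\varepsilon[f_k,g]+I_\varepsilon[f-f_k,g]$, control the error via the quantitative estimate, and send $k\to\infty$ before $\varepsilon\to 0$. The main technical point is the exact bookkeeping of the cancellation between the $\varepsilon^{-1}$ produced by differentiating the mollifier and the $\varepsilon$ supplied by the finite difference of $f$; a secondary subtlety is that $\partial$ may denote a space or a time derivative, but since $\eta_\varepsilon$ acts jointly in $(t,x)$ and one works in the region $t>\varepsilon$ where $f^\varepsilon$ is well defined, the argument is uniform in the choice of $\partial$.
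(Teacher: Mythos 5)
The paper does not actually prove this lemma: it is quoted from \cite{F04,L} (in the form adopted from \cite{LV}), so there is no in-paper argument to compare yours against, and your Friedrichs/DiPerna--Lions-type scheme is indeed the standard route to results of this kind. The kernel representation $I_\varepsilon(x)=\int\partial\eta_\varepsilon(x-y)\bigl(f(y)-f(x)\bigr)g(y)\,dy-\partial f(x)\,g^\varepsilon(x)$, the rescaling $y=x-\varepsilon z$, and the Minkowski--H\"older bookkeeping are all correct, as is the density argument for the convergence statement (modulo the usual caveats: when $p=\infty$ one must approximate $g$ rather than $f$, and translations near $t=0$ and $\partial\O$ require restricting to an interior region, which is why $f^\varepsilon$ is only defined for $t>\varepsilon$).

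The genuine gap is in the step where you claim that writing $f(x-\varepsilon z)-f(x)$ as an integral along the segment from $x$ to $x-\varepsilon z$ supplies ``one factor of $\partial f$''. Since $\eta_\varepsilon$ mollifies jointly in $(t,x)$, the variable $z$ ranges over the full unit ball of $\R^{N+1}$, so the fundamental theorem of calculus along that segment produces the full space-time gradient $\nabla_{t,x}f$, not the single derivative $\partial f$. Your argument therefore yields the estimate $C\|\nabla_{t,x}f\|_{L^p}\|g\|_{L^q}$ under the hypothesis that \emph{all} first derivatives of $f$ lie in $L^p$, which is strictly stronger than the stated hypothesis that only $f$ and the one derivative $\partial f$ do. The transverse components of the increment are simply not controlled by $\partial f$: if $f$ depends only on a variable orthogonal to the direction of $\partial$ and is merely $L^p$ in that variable, then $\partial f=0$ while the commutator is generically nonzero, so no bound by $\|\partial f\|_{L^p}\|g\|_{L^q}$ alone can close. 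This is arguably a defect of the literal statement as much as of your proof---the cited sources work with the full gradient, and in Section 3 the lemma is applied to $f=\rho$ with both $\rho_t$ and $\nabla\rho$ controlled, so the version you actually prove is the one that is used---but as a proof of the lemma as stated your argument does not go through: you should either strengthen the hypothesis to $\nabla_{t,x}f\in L^p$ or give a separate treatment of the increments transverse to $\partial$.
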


For the purpose of this paper, we need to extend the above lemma to the Besov space.
 \begin{Lemma}
 \label{key lemma}
 Let $f \in B_p^{\alpha,\infty}( \O)$, $g\in L^q (\O)$ with $1\leq p,\,q \leq \infty.$  Then, we have
$$\|(fg)^{\varepsilon}-f\,g^{\varepsilon}\|_{L^r(\O)}\leq C\| f\|_{B_p^{\alpha,\infty}( \O)}\|g\|_{L^q(\O)}$$
for some constant $C>0$ independent of  $f$ and $g$, and with $\frac{1}{r}=\frac{1}{p}+\frac{1}{q}$. In addition,
$$\|(fg)^{\varepsilon}-f\,g^{\varepsilon}\|_{ L^r(\O)}\leq C\varepsilon^{\alpha}\to 0$$
as $\varepsilon\to 0$ if $r<\infty.$
 \end{Lemma}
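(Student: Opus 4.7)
The plan is to rewrite the commutator as a translation integral, apply Minkowski's integral inequality in the mollifier variable, and then use H\"older's inequality together with the defining finite-difference bound for the Besov seminorm. First, since $\int \eta_\varepsilon(y)\,dy = 1$, I would write
\begin{equation*}
(fg)^\varepsilon(x) - f(x)\,g^\varepsilon(x) = \int \eta_\varepsilon(y)\bigl[f(x-y) - f(x)\bigr]g(x-y)\,dy,
\end{equation*}
where (since the setting of this paper is the torus $\mathbb{T}^N$) translations cause no boundary issues. This reduces the problem to controlling a pointwise finite-difference of $f$ weighted against $g$.

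Next I would take the $L^r(\Omega)$ norm in $x$ and pull it inside the $y$-integral via Minkowski's inequality:
\begin{equation*}
\|(fg)^\varepsilon - f\,g^\varepsilon\|_{L^r(\Omega)} \leq \int \eta_\varepsilon(y)\,\bigl\|[f(\cdot - y) - f(\cdot)]\,g(\cdot - y)\bigr\|_{L^r(\Omega)}\,dy.
\end{equation*}
Since $\tfrac{1}{r} = \tfrac{1}{p} + \tfrac{1}{q}$, H\"older's inequality in $x$ gives
\begin{equation*}
\bigl\|[f(\cdot - y) - f(\cdot)]\,g(\cdot - y)\bigr\|_{L^r} \leq \|f(\cdot - y) - f(\cdot)\|_{L^p(\Omega)}\,\|g(\cdot - y)\|_{L^q(\Omega)}.
\end{equation*}

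Then I would invoke translation invariance of $L^q$ on $\mathbb{T}^N$, together with the defining property of the Besov norm, $\|f(\cdot - y) - f(\cdot)\|_{L^p(\Omega)} \leq |y|^\alpha\,\|f\|_{B_p^{\alpha,\infty}(\Omega)}$. Plugging back in and using that $\eta_\varepsilon$ is supported in the ball of radius $\varepsilon$,
\begin{equation*}
\|(fg)^\varepsilon - f\,g^\varepsilon\|_{L^r(\Omega)} \leq \|f\|_{B_p^{\alpha,\infty}(\Omega)}\,\|g\|_{L^q(\Omega)} \int_{|y|\leq\varepsilon} \eta_\varepsilon(y)\,|y|^\alpha\,dy \leq C\varepsilon^\alpha\,\|f\|_{B_p^{\alpha,\infty}(\Omega)}\,\|g\|_{L^q(\Omega)},
\end{equation*}
which is both of the asserted bounds, and the convergence to zero when $r<\infty$ is immediate from $\varepsilon^\alpha\to 0$.

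No step presents a real obstacle; the only mild subtlety is that the Besov seminorm in \eqref{Besov norm} is defined via $L^p$ of the difference on $\Omega \cap (\Omega - \xi)$, which on $\mathbb{T}^N$ coincides with $\Omega$, so the translation step is clean. On a general domain one would first extend $f$ or restrict the integration to $\Omega \cap (\Omega - y)$, but in the periodic setting of this paper no such adjustment is needed.
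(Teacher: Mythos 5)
Your proposal is correct and follows essentially the same route as the paper: both rewrite the commutator as $\int \eta_\varepsilon(y)[f(x-y)-f(x)]g(x-y)\,dy$, extract the factor $|y|^\alpha$ to invoke the Besov seminorm, and conclude via H\"older that the $L^r$ norm is bounded by $C\varepsilon^\alpha\|f\|_{B_p^{\alpha,\infty}}\|g\|_{L^q}$. You merely make explicit the Minkowski and translation-invariance steps that the paper leaves implicit, which is a harmless (and arguably welcome) elaboration.
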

\begin{proof}
Considering
\begin{equation*}\begin{split}&
(fg)^{\varepsilon}-f\,g^{\varepsilon}=\int_{\O}\left(f(y)g(y)-f(x)g(y)\right)\eta_{\varepsilon}(|x-y|)\,dy
\\&=\int_{\O}g(y)\left(f(y)-f(x)\right)\eta_{\varepsilon}(|x-y|)\,dy
\\&=\int_{\O}g(x-z)\frac{f(x-z)-f(x)}{|z|^{\alpha}}\eta_{\varepsilon}(|z|)|z|^{\alpha}\,dz,
\end{split}
\end{equation*} for any $x\in \O$, we have
\begin{equation*}
\|(fg)^{\varepsilon}-f\,g^{\varepsilon}\|_{L^r(\O)}\leq C\|g\|_{L^q(\O)}\|f\|_{B_p^{\alpha,\infty}( \O)}\varepsilon^{\alpha}. \qedhere
\end{equation*}
\end{proof}

\vskip0.3cm
\section{Proof of Theorem \ref{thm_main1}}

We introduce a function $\Phi(t,x) =(\psi(t)\u^{\varepsilon})^{\varepsilon}$ as a test function for deriving the energy equality, where $\psi(t)\in  \mathfrak{D}(0,+\infty)$ is a test function,
with $\mathfrak{D}(0,+\infty)$ being the class of all smooth compactly supported functions on $(0,+\infty)$. Here we further remark that this function vanishes  near $t=0$.
However, later it is needed to extend the result for $\psi(t)\in  \mathfrak{D}(-1,+\infty)$ in order to recover the initial value of the energy.

Note that $\psi(t)$ is compactly supported in $(0,\infty).$ Hence $\Phi$ is a well-defined test function for $t\in (0,\infty)$ and for $\varepsilon$ small enough. Multiplying $\Phi$ on both sides of the second equation in \eqref{Euler}, one obtains
\begin{equation*}
\int_0^T\int_{\T}\Phi\left[(\rho\u)_t+\Dv(\rho\u\otimes\u)+\nabla P\right]\,dx\,dt=0,
\end{equation*}
which in turn yields
\begin{equation}
\label{weak formulation}
\int_0^T\int_{\T}\psi(t)\u^{\varepsilon}\left[ (\rho\u)_t+\Dv(\rho\u\otimes\u)+\nabla P\right]^{\varepsilon}\,dx\,dt=0,
\end{equation}
where we have used the fact that $\eta(-t,-x)=\eta(t,x).$

The first term in \eqref{weak formulation} can be computed as
\begin{equation}
\begin{split}
\label{the first term}
\int_0^T & \int_{\T}\psi(t)\u^{\varepsilon}\left((\rho\u)_t\right)^{\varepsilon}\,dx \\
&=\int_0^T\int_{\T}\psi(t)\left[\left((\rho\u)_t\right)^{\varepsilon}-(\rho\u^{\varepsilon})_t\right]\u^{\varepsilon}\,dx\,dt  +\int_0^T\int_{\T}\psi(t)(\rho\u^{\varepsilon})_t\u^{\varepsilon}\,dx\,dt
\\&=: A_{\varepsilon} +\int_0^T\int_{\T}\psi(t)\rho\partial_t{\frac{|\u^{\varepsilon}|^2}{2}}\,dx\,dt+\int_0^T\int_{\T}\psi(t)\rho_t|\u^{\varepsilon}|^2\,dx\,dt.
\end{split}
\end{equation}
Similarly, the second term in \eqref{weak formulation} can be treated as
\begin{equation}
\begin{split}
\label{the second term}
\int_0^T & \int_{\T}\psi(t) \u^{\varepsilon}\left(\Dv(\rho\u\otimes\u)\right)^{\varepsilon}\,dx\,dt \\
& =
\int_0^T\int_{\T}\psi(t)\left[\left(\Dv(\rho\u\otimes\u)\right)^{\varepsilon}-\Dv(\rho\u\otimes\u^{\varepsilon})\right]\u^{\varepsilon}\,dx\,dt \\
&\quad\ \ +\int_0^T\int_{\T}\psi(t)\Dv(\rho\u\otimes\u^{\varepsilon})\u^{\varepsilon}\,dx\,dt
\\&=: B_{\varepsilon}+\int_0^T\int_{\T}\psi(t)\rho\u\cdot\nabla\frac{|\u^{\varepsilon}|^2}{2}\,dx+\int_0^T\int_{\T}\psi(t)\Dv(\rho\u)|\u^{\varepsilon}|^2\,dx\,dt
\\&=B_{\varepsilon}-\int_0^T\int_{\T}\psi(t)\rho_t\frac{|\u^{\varepsilon}|^2}{2}\,dx,
\end{split}
\end{equation}
where an integration by parts and the first equation of \eqref{Euler} are used to obtain the last equality.

Note that from \eqref{condition for first theorem}, $\nabla\rho\in L^{\infty}(0,T;L^p(\T))$ and $\rho_t=-\u\cdot\nabla\rho$. The last term of the right-hand side in \eqref{the first term} and the second term of the right-hand side in \eqref{the second term} are well-defined. Thanks to  \eqref{weak formulation}, \eqref{the first term} and \eqref{the second term},  we have
\begin{equation}
\label{the first two terms}
-\int_0^T\int_{\T}\psi_t\frac{1}{2}\rho|\u^{\varepsilon}|^2\,dx\,dt+A_{\varepsilon}+B_{\varepsilon}=0.
\end{equation}

From \eqref{condition for first theorem} and \eqref{convergence in Besov space}, one obtains
\begin{equation}
\label{convegence first term}
\int_0^T\int_{\T}\frac{1}{2}\rho|\u^{\varepsilon}|^2\psi_t\,dx\,dt \longrightarrow \int_0^T\int_{\T}\frac{1}{2}\rho|\u|^2\psi_t\,dx\,dt\;\;\text{as }\;\varepsilon\to0.
\end{equation}

\vskip0.3cm

To derive the energy equality from \eqref{the first two terms} in the distributional sense,
we will apply Lemma \ref{Lions's lemma} to prove
\begin{equation}
\label{the rest goes to zero}
A_{\varepsilon}(t,x)\to 0\quad\text{ and }\quad B_{\varepsilon}(t,x)\to 0
\end{equation}
 as $\varepsilon$ goes to zero.

In fact, note from $\rho_t=-\u\cdot\nabla\rho$ and \eqref{condition for first theorem} that $\rho_t$ is bounded in
$L^{q}(0,T;L^{\kappa}(\T))$ where $\frac{1}{\kappa}=\frac{1}{p}+\frac{1}{q}.$
Thus, Lemma \ref{Lions's lemma} gives
\begin{equation*}
\begin{split}
|A_{\varepsilon}|&\leq\|\psi(t)\|_{L^{\infty}(0,T)}\int_0^T\int_{\T}\Big| \u^{\varepsilon}[\left((\rho\u)_t\right)^{\varepsilon}-(\rho\u^{\varepsilon})_t]\Big|\,dx\,dt
\\&\leq C\|\psi(t)\|_{L^{\infty}(0,T)}\|\rho_t\|_{L^{q}(0,T;L^{\kappa}(\T))}\|\u\|^2_{L^q(0,T;L^q(\T))},
\end{split}
\end{equation*}
where $\frac{3}{q}\leq 1$ and $\frac{3}{q}+\frac{1}{p}\leq 1.$ Moreover,  as $\varepsilon$ tends to zero, we have
\begin{equation*}
\label{term A goes to 0}
\begin{split}
A_{\varepsilon}\to 0.
\end{split}
\end{equation*}

We are not able to apply
Lemma \ref{Lions's lemma} to control $B_{\epsilon}$ directly because there is no estimate on $\nabla(\rho\u)$ in $L^p$. Instead, we will use Lemma \ref{key lemma}.
In fact,
\begin{equation*}
\begin{split}B_{\epsilon}&=
\int_0^T\int_{\T}\psi(t)\left[\left(\Dv(\rho\u\otimes\u)\right)^{\varepsilon}-\Dv(\rho\u\otimes\u^{\varepsilon})\right]\u^{\varepsilon}\,dx\,dt
\\&=
-\int_0^T\int_{\T}\psi(t)\left[\left(\rho\u\otimes\u\right)^{\varepsilon}-(\rho\u\otimes\u^{\varepsilon})\right]\nabla\u^{\varepsilon}\,dx\,dt,
\end{split}
\end{equation*}
thus,
\begin{equation*}
\begin{split}|B_{\varepsilon}|&\leq
C\int_0^T\int_{\T}\left|\left(\rho\u\otimes\u\right)^{\varepsilon}-\rho(\u\otimes\u)^{\varepsilon}\right||\nabla\u^{\varepsilon}|\,dx\,dt
\\&\quad +
C\int_0^T\int_{\T}\left|\rho[(\u\otimes\u)^{\varepsilon}-\u\otimes\u^{\varepsilon}]\right||\nabla\u^{\varepsilon}|\,dx\,dt\\&
=: B_{1\varepsilon}+B_{2\varepsilon}.
\end{split}
\end{equation*}
Note that $\nabla\rho$ is bounded in $L^{\infty}(0,T;L^p(\T))$, thus we can apply Lemma \ref{key lemma} to
 control $B_{1\varepsilon}$ and $B_{2\varepsilon}$ as follows
 \begin{equation*}
 B_{1\varepsilon}\leq C\|\u\|_{L^q(0,T;B_p^{\alpha,\infty}( \T))}\|\nabla\rho\|_{L^p(0,T;L^p(\T))}\varepsilon^{3\alpha}\to 0
 \end{equation*}
and
 \begin{equation*}
 B_{2\varepsilon}\leq C\|\u\|_{L^q(0,T;B_p^{\alpha,\infty}( \T))}\varepsilon^{3\alpha-1}\to 0
 \end{equation*}
as $\varepsilon \to 0$ for $\alpha>\frac{1}{3}$. Thus, $B_{\varepsilon}$ tends to zero as $\varepsilon\to 0$.

\medskip

We are ready to pass to the limits in \eqref{the first two terms}. Letting $\varepsilon$ go to zero and using \eqref{convegence first term}-\eqref{the rest goes to zero}, what we have proved is that in the limit,
\begin{equation}
\label{limit equation first}
-\int_0^T\int_{\T}\psi_t\frac{1}{2}\rho|\u|^2\,dx\,dt=0
\end{equation}
for any test function $\psi\in \mathfrak{D}(0,\infty).$

\medskip

To obtain the energy conservation up to the initial time, we need to extend \eqref{limit equation first} for the test function $\psi(t)\in \mathfrak{D}(-1,\infty).$ To this end, it is necessary for us to have the continuity of $(\sqrt{\rho}\u)(t)$ in the strong topology at $t=0$.
Therefore we compute
\begin{equation}
\begin{split}
\label{control of sqrt density and u}
\text{ess} & \limsup_{t\to0}\int_{\T}|\sqrt{\rho}\u-\sqrt{\rho_0}\u_0|^2\,dx \\
&\leq\text{ess}\limsup_{t\to0}\left(\int_{\T}\rho|\u|^2\,dx-\int_{\T}\rho_0|\u_0|^2\,dx\right)
\\&\ \ +\text{ess}\limsup_{t\to0}\left(2\int_{\T}\sqrt{\rho_0}\u_0(\sqrt{\rho_0}\u_0-\sqrt{\rho}\u)\,dx\right).
\end{split}
\end{equation}

This yields
\begin{equation}
\begin{split}
\label{the rest term for initial time}
&\text{ess}\limsup_{t\to0}\int_{\T}|\sqrt{\rho}\u-\sqrt{\rho_0}\u_0|^2\,dx
\\&\ \ \ \leq 2\text{ess}\limsup_{t\to0}\int_{\T}\sqrt{\rho_0}\u_0(\sqrt{\rho_0}\u_0-\sqrt{\rho}\u)\,dx =: W.
\end{split}
\end{equation}

To show the continuity of $(\sqrt{\rho}\u)(t)$ in the strong topology at $t=0$, we need $W=0$. To this end, for any fixed $\phi \in \mathfrak{D}(\T)$ satisfying $\Dv \phi = 0$, we define the function $f$ on $[0, T]$ as
\begin{equation*}
f(t) = \int_{\T}(\rho\u)(t,x)\cdot\phi(x)\,dx.
\end{equation*}
Note from \eqref{condition for first theorem} that the function
$$
\int_{\T}(\rho\u)(t,x)\cdot\phi(x)\,dx
$$
is continuous function with respect to $t\in[0,T].$ On the other hand, note from \eqref{condition for first theorem} and \eqref{condition for kinetic energy} that
$$\rho\in L^{\infty}(0,T;L^{\infty}(\T))\quad\text{ and }\sqrt{\rho}\u \in L^{\infty}(0,T; L^2(\T)),$$
and hence we obtain that
$$\rho\u\in L^{\infty}(0,T;L^{2}(\T)).$$
From equation \eqref{Euler} we further know that
\begin{equation}
\label{integral for mass}
\frac{d}{dt}\int_{\T}(\rho\u)(t,x)\cdot\phi(x)\,dx=
\int_{\T}\rho\u\otimes\u:\nabla \phi\,dx,
\end{equation}
which is bounded due to \eqref{condition for kinetic energy}. Therefore from \cite[Corollary 2.1]{F04} it follows that
\begin{equation}
\label{weak continuous for mass}
\rho\u\in C([0,T]; L^{2}_{\text{weak}}(\T)).
\end{equation}
Applying the same argument to $\rho_t=-\Dv(\rho\u)$ with $\Dv\u=0$, we have \begin{equation}
\label{weak continuity for half density}
\sqrt{\rho}\in C([0,T];L_{\text{weak}}^{b}(\T))
\end{equation}
for any $b>1.$

We now estimate $W$ as follows
\begin{equation}\begin{split}\label{show B is zero}
W & :=2\text{ess}\limsup_{t\to0}\int_{\T}\u_0(\rho_0\u_0-\sqrt{\rho_0\rho}\u)\,dx
\\&\ \leq 2\text{ess}\limsup_{t\to0}\int_{\T}\u_0(\rho_0\u_0-\rho\u)\,dx \\
& \quad + 2\text{ess}\limsup_{t\to0}\int_{\T}\u_0(\sqrt{\rho}-\sqrt{\rho_0})\sqrt{\rho}\u\,dx.
\end{split}
\end{equation}

Meanwhile,
by \eqref{condition for first theorem}, we have \begin{equation}
\label{ssss}\sqrt{\rho}\u\in L^q(0,T;L^q(\T))\quad\text{ for any }q>3.
\end{equation}
Using \eqref{weak continuous for mass}, \eqref{weak continuity for half density} and \eqref{ssss}  in \eqref{show B is zero},  one deduces $W=0$ provided that $\u_0\in L^2(\T).$
 Thus, we have
\begin{equation*}
\text{ess}\limsup_{t\to0}\int_{\T}|\sqrt{\rho}\u-\sqrt{\rho_0}\u_0|^2\,dx=0,
\end{equation*}
which gives us
\begin{equation}
\label{continuity in strong topology 2}\sqrt{\rho}\u\in C([0,T];L^2(\T)).
\end{equation}

 From \eqref{continuity in strong topology 2}, we deduce
 \begin{equation}
 \label{limit000}\lim_{\tau\to0}\frac{1}{\tau}\int_0^{\tau}\int_{\T}\frac{1}{2}\rho|\u|^2\,dx\,dt=\int_{\T}\frac{1}{2}\rho_0|\u_0|^2\,dx.
 \end{equation}

 \medskip

 For each $\tau>0$ we choose a $C^1$ test function $\psi_{\tau,K}$ for \eqref{limit equation first} such that
 \begin{equation*}
 \begin{split}
 \psi_{\tau, K}(t)= \left\{\begin{array}{ll}
 \displaystyle \psi(t)\;\; & \text{ for } \,t\geq \tau+\frac{1}{K},\\\\
 \displaystyle \frac{t}{\tau}\;\; & \text{ for } t\leq \tau
 \end{array}\right.
 \end{split}
 \end{equation*}
 for $K>0$.
 Then replacing $\psi$ by $\psi_{\tau,K}$ in \eqref{limit equation first} we get
 \begin{equation}
\begin{split}
\label{AASS}
-\int_{\tau+\frac{1}{K}}^T & \int_{\T}  \psi_t\frac{1}{2}\rho|\u|^2\,dx\,dt-\int_{\tau}^{\tau+\frac{1}{K}}\int_{\T} (\partial_t\psi_{\tau,K}) \frac{1}{2}\rho|\u|^2\,dx\,dt \\
& =\frac{1}{\tau}\int_0^{\tau}\int_{\T}\frac{1}{2}\rho|\u|^2\,dx\,dt.
\end{split}
\end{equation}
Note that, \begin{equation*}
\begin{split}&
\left|\int_{\tau}^{\tau+\frac{1}{K}}\int_{\T}(\partial_t \psi_{\tau,K}) \frac{1}{2}\rho|\u|^2\,dx\,dt\right|\leq \frac{C}{K}\int_{\T}\frac{1}{2}\rho|\u|^2\,dx
\to0
\end{split}
\end{equation*}
as $K$ goes to infinity. This way by letting $K\to\infty,$ from \eqref{AASS} we derive
\begin{equation}
\begin{split}
\label{key limit-aaa}
-\int_{\tau}^T\int_{\T}\psi_t\frac{1}{2}\rho|\u|^2\,dx\,dt
=\frac{1}{\tau}\int_0^{\tau}\int_{\T}\frac{1}{2}\rho|\u|^2\,dx\,dt.
\end{split}
\end{equation}
By \eqref{limit000},
 passing into the limit as $\tau\to 0$ in \eqref{key limit-aaa}, one obtains
 \begin{equation}
\label{the second level limit}
\begin{split}
-\int_{\tau}^T\int_{\T}\psi_t\frac{1}{2}\rho|\u|^2\,dx\,dt
=\int_{\T}\frac{1}{2}\rho_0|\u_0|^2\,dx.
\end{split}
\end{equation}
For $\tilde t > 0$, taking
\begin{equation}\label{psi_name}
\psi(t) = \begin{cases} \displaystyle 0 \;\;\;\;\;\quad\quad\quad\quad\text{ if }t\leq \tilde{t}-\frac{\epsilon}{2},
\\\\ \displaystyle \frac{1}{2}+\frac{t-\tilde{t}}{\epsilon}\;\;\;\;\;\;\;\text{ if } \tilde{t}-\frac{\epsilon}{2}\leq t\leq \tilde{t}+\frac{\epsilon}{2},
\\\\ \displaystyle 1\,\;\;\;\;\quad\quad\;\quad\quad\text{ if }  t\geq \tilde{t}+\frac{\epsilon}{2},
\end{cases}\end{equation}
then \eqref{the second level limit} gives for every $\tilde{t}\geq \frac{\epsilon}{2}$ that
\begin{equation}
\label{the third level limit}
\begin{split}&
\frac{1}{\epsilon}\int_{\tilde{t}-\frac{\epsilon}{2}}^{\tilde{t}+\frac{\epsilon}{2}}\left(\int_{\T}\frac{1}{2}\rho|\u|^2\,dx\right)\,dt
=\int_{\T}\frac{1}{2}\rho_0|\u_0|^2\,dx.
\end{split}
\end{equation}

Applying the Lebesgue point Theorem, \eqref{the third level limit} implies that
\begin{equation*}
\begin{split}&\int_{\T}\frac{1}{2}\rho|\u|^2(t)\,dx
=\int_{\T}\frac{1}{2}\rho_0|\u_0|^2\,dx
\end{split}
\end{equation*}
for any $t\in[0,T]$, completing the proof of Theorem \ref{thm_main1}.

\vskip0.3cm
\section{Proof of Theorem \ref{thm_main2}}
Following the previous section, we have
\begin{equation}
\label{the energy in distributation}
-\int_0^T\int_{\T}\psi_t\frac{1}{2}\rho^{\varepsilon}|\u^{\varepsilon}|^2\,dx\,dt+A_{\varepsilon}+B_{\varepsilon}=0,
\end{equation}
with
\begin{equation}
\label{Error 1}
A_{\varepsilon}=
\int_0^T\int_{\T}\psi(t)\left(\left((\rho\u)_t\right)^{\varepsilon}-(\rho^{\varepsilon}\u^{\varepsilon})_t\right)\u^{\varepsilon}\,dx\,dt
\end{equation}
and
\begin{equation}
\label{error 2} B_{\varepsilon}=
\int_0^T\int_{\T}\psi(t)\left(\left(\Dv(\rho\u\otimes\u)\right)^{\varepsilon}-\Dv(\rho\u\otimes\u^{\varepsilon})\right)\u^{\varepsilon}\,dx\,dt.
\end{equation}
To prove Theorem \ref{thm_main2}, we need to show $$A_{\varepsilon}\to 0,\quad\text{ and }\,B_{\varepsilon}\to 0\;\;\;\text{ as } \varepsilon\to 0.$$
 We handle the term $A_{\varepsilon}$ as follows
 \begin{equation*}\begin{split}
A_{\varepsilon}&=
-\int_0^T\int_{\T}\psi_t\left(\left(\rho\u\right)^{\varepsilon}-\rho^{\varepsilon}\u^{\varepsilon}\right)\u^{\varepsilon}\,dx\,dt
 -\int_0^T\int_{\T}\psi\left(\left(\rho\u\right)^{\varepsilon}-\rho^{\varepsilon}\u^{\varepsilon}\right)\u_t^{\varepsilon}\,dx\,dt
\\&=: A_{1\varepsilon}+A_{2\varepsilon}.
\end{split}
\end{equation*}
The first term can be estimated as
\begin{equation*}
\begin{split}
&|A_{1\varepsilon}|\leq C(\psi_t)\int_0^T\int_{\T}\left|\left(\rho\u\right)^{\varepsilon}-\rho^{\varepsilon}\u\right||\u^{\varepsilon}|\,dx\,dt
+C(\psi_t)\int_0^T\int_{\T}\left|\rho^{\varepsilon}\u-\rho^{\varepsilon}\u^{\varepsilon}\right||\u^{\varepsilon}|\,dx\,dt.
\end{split}
\end{equation*}
We can then use Lemma \ref{key lemma} to  control the first term of the right-hand side of the above as follows
\begin{equation*}
\begin{split}&
\int_0^T\int_{\T}\left|\left(\rho\u\right)^{\varepsilon}-\rho^{\varepsilon}\u\right||\u^{\varepsilon}|\,dx\,dt
 \leq C \varepsilon^{\alpha} \|\rho\|_{L^{\infty}(0,T;L^{\infty}(\T))}\|\u\|^2_{B_p^{\beta,\infty}(0,T;B_q^{\alpha,\infty}(\T))}\to 0;
\end{split}
\end{equation*}
and the second term as
 \begin{equation*}
 \begin{split}
 \int_0^T\int_{\T} & \left|\rho ^{\varepsilon}\u-\rho^{\varepsilon}\u^{\varepsilon}\right||\u^{\varepsilon}|\,dx\,dt
 \\&\leq C \|\rho^{\varepsilon}\|_{L^{\infty}(0,T;L^\infty(\T))}\|\u^{\varepsilon}-\u\|_{L^p(0,T;B_q^{\alpha,\infty}(\T))}\|\u^{\varepsilon}\|_{L^p(0,T;B_q^{\alpha,\infty}(\T))}
 \\&\leq C \varepsilon^{\alpha} \|\rho\|_{L^{\infty}(0,T;L^{\infty}(\T))}\|\u\|^2_{L^p(0,T;B_q^{\alpha,\infty}(\T))}
 \to 0
 \end{split}
 \end{equation*}
 as $\varepsilon\to 0,$
where $p,q\geq 2.$

Similar argument applying to $A_{2\varepsilon}$ yields
\begin{equation*}
 \begin{split}
 |A_{2\varepsilon}| & \leq C\int_0^T\int_{\T}\left|\left(\rho\u\right)^{\varepsilon}-\rho^{\varepsilon}\u^{\varepsilon}\right||\u_t^{\varepsilon}|\,dx\,dt
 \\&\leq  C\int_0^T\int_{\T}\left|\left(\rho\u\right)^{\varepsilon}-\rho^{\varepsilon}\u\right||\u_t^{\varepsilon}|\,dx\,dt
 +C\int_0^T\int_{\T}\left|\left(\rho^{\varepsilon}\u\right)-\rho^{\varepsilon}\u^{\varepsilon}\right||\u_t^{\varepsilon}|\,dx\,dt
 \\&\leq 2C\|\rho\|_{L^{\infty}(0,T;L^{\infty}(\T))}\|\u\|_{B_p^{\beta,\infty}(0,T;B_q^{\alpha,\infty}(\T))}\varepsilon^{\alpha+2\beta-1},
 \end{split}
 \end{equation*}
 which converges to zero as $\varepsilon$ goes to zero when $\alpha+2\beta>1.$
 Thus, we have $A_{\varepsilon}\to 0$ as $\varepsilon$ goes to zero.

To handle $B_{\varepsilon}$,
 \begin{equation*}
 \begin{split}| B_{\varepsilon}|&\leq C
\int_0^T\int_{\T}\left|(\rho\u\otimes\u)^{\varepsilon}-(\rho\u)^{\varepsilon}\otimes\u^{\varepsilon}\right||\nabla\u^{\varepsilon}|\,dx\,dt
\\&\leq  C
\int_0^T\int_{\T}\left|(\rho\u\otimes\u)^{\varepsilon}-(\rho\u)^{\varepsilon}\otimes\u\right||\nabla\u^{\varepsilon}|\,dx\,dt
\\&\quad+
 C
\int_0^T\int_{\T}\left|(\rho\u)^{\varepsilon}\otimes \u-(\rho\u)^{\varepsilon}\otimes\u^{\varepsilon}\right||\nabla\u^{\varepsilon}|\,dx\,dt
\\&=: B_{1\varepsilon}+B_{2 \varepsilon}.
\end{split}
\end{equation*}
Note that $\u \in B_p^{\beta,\infty}(0,T;B_q^{\alpha,\infty}(\T))$. From Lemma \ref{key lemma} we conclude that
$$B_{1\varepsilon}\leq C \|\u\|^3_{B_p^{\beta,\infty}(0,T;B_q^{\alpha,\infty}(\T))}\varepsilon ^{2\alpha+\beta-1}\to 0, $$
and
$$B_{2\varepsilon}\leq C \|\u\|^3_{B_p^{\beta,\infty}(0,T;B_q^{\alpha,\infty}(\T))}\varepsilon ^{2\alpha+\beta-1}\to 0, $$
for any $2\alpha+\beta>1$ and $p, q \ge 3$.

Letting $\varepsilon\to 0$ in \eqref{the energy in distributation}, one obtains
\begin{equation*}
-\int_0^T\int_{\T}\psi_t\frac{1}{2}\rho|\u|^2\,dx\,dt=0.
\end{equation*}
Because the regularity of $\rho$ and $\u$ allow us to have
$$\sqrt{\rho}\in C([0,T]; L^2_{\text{weak}}(\T))$$
and $$\sqrt{\rho}\u\in C([0,T]; L^2(\T)).$$
Thus, we can repeat the same argument in Section 3 to show
 \begin{equation*}
\begin{split}&\int_{\T}\frac{1}{2}\rho|\u|^2(t)\,dx
=\int_{\T}\frac{1}{2}\rho_0|\u_0|^2\,dx
\end{split}
\end{equation*}
for any $t\in[0,T].$

\section*{Acknowledgements}
The work of Chen is partially supported by National Science Foundation under Grant DMS-1613375. This material is based upon work supported by the National Science Foundation under Grant No. DMS-1439786 while the first author was in residence at the Institute for Computational and Experimental Research in Mathematics in Providence, RI, during the Spring 2017 semester.

The authors are also grateful to Prof. Claude Bardos for valuable suggestions.

\bigskip\bigskip

\end{document}